\def\1{$\ }
\def\2{\ $}
\def\cE{{\cal E}}
\def\cH{{\cal H}}
\def\hqed{\hfill\framebox(6,6){\ }}
\def\ul{\underline}
\def\eiE{\cE^{EI}}
\begin{document}
%
\title{Nearly all cacti are edge intersection hypergraphs of  3-uniform hypergraphs}
\author{Martin Sonntag\inst{1} \and Hanns-Martin Teichert\inst{2}}

%
%
\institute{Faculty of Mathematics and Computer Science, Technische Universit\"{a}t Bergakademie Freiberg, Pr\"{u}ferstra\ss e 1, 09596 Freiberg, Germany
\and Institute of Mathematics, University of L\"{u}beck, Ratzeburger Allee 160, 23562 L\"{u}beck, Germany
}
\maketitle
\begin{abstract}
If $\cH=(V,\cE)$ is a hypergraph, its {\it edge intersection hypergraph} $EI(\cH)=(V,\eiE)$ has the edge set $\eiE=\{e_1 \cap e_2 \ |\ e_1, e_2 \in \cE \ \wedge \ e_1 \neq e_2  \ \wedge  \ |e_1 \cap e_2 |\geq2\}$. Using the so-called {\em clique-fusion}, we show that nearly all cacti
are edge intersection hypergraphs of 3-uniform hypergraphs. In the proof we make use of known characterizations of the trees and the cycles which are edge intersection hypergraphs of 3-uniform hypergraphs  (cf. \cite{ST2019_1}).
\end{abstract}
\begin{keyword}
Edge intersection hypergraph, cactus
\end{keyword}
\small{\bf  Mathematics Subject Classification 2010:} 05C65
%
\section{Introduction and basic definitions}
All hypergraphs $\cH = (V(\cH), \cE(\cH))$ and  graphs $G=(V(G), E(G))$  considered in the following may have isolated vertices but no multiple edges or loops.

A hypergraph $\cH = (V, \cE)$ is {\em $k$-uniform} if all hyperedges $e \in \cE$ have the cardinality $k$.
Trivially, any 2-uniform hypergraph $\cH$ is a graph.
The {\em degree} $d(v)$ (or $d_{\cH}(v)$) of a vertex $v \in V$ is the number of hyperedges $e \in \cE$ being incident to the vertex $v$. In standard terminology we follow Berge \cite{GST5}.

If $\cH=(V,\cE)$ is a hypergraph, its {\it edge intersection hypergraph} $EI(\cH)=(V,\eiE)$ has the edge set $\eiE=\{e_1 \cap e_2 \ |\ e_1, e_2 \in \cE \ \wedge \ e_1 \neq e_2  \ \wedge \ |e_1 \cap e_2 |\geq2\}$.

For an application, as well as for more details on structural properties of edge intersection hypergraphs, see \cite{ST2019_1}.

Obviously, for certain hypergraphs $\cH$ the edge intersection hypergraph $EI(\cH)$  can be 2-uniform; in this case $EI(\cH)$ is a simple, undirected graph $G$. But in contrast to the notions {\em intersection graph} or  {\em edge intersection graph} (cf. \cite{Bie}, \cite{Cam}, \cite{Gol}, \cite{Nai} and \cite{Sku}),
   $G=EI(\cH)$ and $\cH$ have one and the same vertex set $V(G) = V(\cH)$.
Therefore we consistently use our notation "edge intersection hypergraph" also when this hypergraph is 2-uniform.

First of all, in Section 2 we introduce a powerful tool for the construction of edge intersection hypergraphs,  the so-called {\em clique-fusion}.

Then we restrict our investigations to cacti.
A simple, connected graph $G=(V,E)$ is referred to as a {\em cactus} if and only if every edge $e \in E$ is contained in at most one cycle of $G$. The {\em circumference} $ci(G)$ of a graph $G$ is the length of a longest cycle in $G$.

In Section 3 we describe a special decomposition of cacti into trees and cycles and make use of
two important results from \cite{ST2019_1}.
The first one  includes that all but seven exceptional trees are edge intersection hypergraphs of 3-uniform hypergraphs. The exceptional trees have at most 6 vertices. Besides the notation $P_n$ for the path with $n$ vertices, in the subsequent Theorem 1 we find two additional notations, namely $T7$ and $T12$ of trees with 5 and 6 vertices, respectively. These notations come from \cite{RW}, they denote the graphs \\
$T7 = ( V = \{v_1,v_2, \ldots, v_5 \}, E= \{\{v_1,v_2\}, \{v_2,v_3\}, \{v_3,v_4\}, \{v_3,v_5\}\})$ and \\
$T12 = ( V= \{v_1,v_2, \ldots, v_6 \}, E= \{\{v_1,v_2\}, \{v_2,v_3\}, \{v_3,v_4\}, \{v_2,v_5\}, \{v_5,v_6\}\})$ (see Fig.1).

\providecolor{black}{rgb}{0,0,0}
	\begin{figure}[h]
	\centering
	\begin{tikzpicture}


		\path[solid,line cap=round,draw=black, line width=0.02586206896551724cm] (1.206896551724138,2.4224137931034484) --  (1.206896551724138,3.4051724137931036);

		\path[solid,line cap=round,draw=black, line width=0.02586206896551724cm] (1.206896551724138,1.4396551724137931) --  (1.206896551724138,2.4224137931034484);

		\path[solid,line cap=round,draw=black, line width=0.02586206896551724cm] (2.189655172413793,0.45689655172413796) --  (1.206896551724138,1.4396551724137931);

		\path[solid,line cap=round,draw=black, line width=0.02586206896551724cm] (0.22413793103448276,0.45689655172413796) --  (1.206896551724138,1.4396551724137931);

		\path[solid,line cap=round,draw=black, line width=0.02586206896551724cm] (6.448275862068965,2.4224137931034484) --  (6.448275862068965,3.4051724137931036);

		\path[solid,line cap=round,draw=black, line width=0.02586206896551724cm] (7.431034482758621,1.4396551724137931) --  (6.448275862068965,2.4224137931034484);

		\path[solid,line cap=round,draw=black, line width=0.02586206896551724cm] (7.431034482758621,1.4396551724137931) --  (7.431034482758621,0.45689655172413796);

		\path[solid,line cap=round,draw=black, line width=0.02586206896551724cm] (6.448275862068965,2.4224137931034484) --  (5.4655172413793105,1.4396551724137931);

		\path[solid,line cap=round,draw=black, line width=0.02586206896551724cm] (5.4655172413793105,1.4396551724137931) --  (5.4655172413793105,0.45689655172413796);

		\fill[fill=black] (1.206896551724138,3.4051724137931036) circle (0.06465517241379311);
		\pgftext[x=1.5517241379310345cm,y=3.4051724137931036cm]{$v_{1}$}

		\fill[fill=black] (1.206896551724138,2.4224137931034484) circle (0.06465517241379311);
		\pgftext[x=1.5517241379310345cm,y=2.4224137931034484cm]{$v_{2}$}

		\fill[fill=black] (1.206896551724138,1.4396551724137931) circle (0.06465517241379311);
		\pgftext[x=1.5517241379310345cm,y=1.4396551724137931cm]{$v_{3}$}

		\fill[fill=black] (2.189655172413793,0.45689655172413796) circle (0.06465517241379311);
		\pgftext[x=2.189655172413793cm,y=0.11206896551724138cm]{$v_{4}$}

		\fill[fill=black] (0.22413793103448276,0.45689655172413796) circle (0.06465517241379311);
		\pgftext[x=0.2241379310344827cm,y=0.11206896551724138cm]{$v_{5}$}

		\fill[fill=black] (6.448275862068965,3.4051724137931036) circle (0.06465517241379311);
		\pgftext[x=6.793103448275862cm,y=3.4051724137931036cm]{$v_{1}$}

		\fill[fill=black] (6.448275862068965,2.4224137931034484) circle (0.06465517241379311);
		\pgftext[x=6.793103448275862cm,y=2.4224137931034484cm]{$v_{2}$}

		\fill[fill=black] (5.4655172413793105,1.4396551724137931) circle (0.06465517241379311);
		\pgftext[x=5.120689655172414cm,y=1.4396551724137931cm]{$v_{5}$}

		\fill[fill=black] (7.431034482758621,1.4396551724137931) circle (0.06465517241379311);
		\pgftext[x=7.775862068965518cm,y=1.4396551724137931cm]{$v_{3}$}

		\fill[fill=black] (7.431034482758621,0.45689655172413796) circle (0.06465517241379311);
		\pgftext[x=7.431034482758621cm,y=0.11206896551724138cm]{$v_{4}$}

		\fill[fill=black] (5.4655172413793105,0.45689655172413796) circle (0.06465517241379311);
		\pgftext[x=5.4655172413793105cm,y=0.11206896551724138cm]{$v_{6}$}
	\end{tikzpicture}
		\caption{ \hspace{35mm} T7 \hspace{44mm}  T12}
	\end{figure}

\begin{theorem}(Theorem 6 in \cite{ST2019_1}).
All trees but $P_2, P_3, P_4, P_5, P_6, T7$ and $T12$ are edge intersection hypergraphs of a 3-uniform hypergraph $\cH$.
\end{theorem}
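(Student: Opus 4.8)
The goal is to construct, for a given non-exceptional tree $T=(V,E)$, a $3$-uniform hypergraph $\cH=(V,\cE)$ with $EI(\cH)=T$. Concretely $\cE$ must satisfy: (i) for every $e\in E$ there are two hyperedges $f_1,f_2\in\cE$ with $f_1\cap f_2=e$; and (ii) whenever $f_1,f_2\in\cE$ are distinct with $|f_1\cap f_2|\ge 2$, the set $f_1\cap f_2$ is an edge of $T$. Since $T$ is triangle-free, any triple $\{x,y,z\}$ can ``carry'' at most a path $x\!-\!y\!-\!z$ of $T$, and two $3$-sets meet in at most two vertices; so the task is to cover $E$ by overlapping triples without producing spurious $2$-element overlaps.

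The plan is to build $\cE$ from the decomposition of $T$ into its branch vertices (degree $\ge 3$) and its threads (maximal paths whose interior vertices all have degree $2$). At a branch vertex $u$ with neighbours listed cyclically as $w_0,\dots,w_{d-1}$, $d=d_T(u)\ge 3$, I put the closed fan $\{\,\{u,w_i,w_{i+1}\}\ :\ i\in\{0,\dots,d-1\}\,\}$ (indices mod $d$): consecutive fan hyperedges meet exactly in some $\{u,w_i\}\in E$ and non-consecutive ones only in $\{u\}$, so a branch vertex contributes precisely the edges at $u$. Along a thread $v_0\!-\!v_1\!-\!\cdots\!-\!v_t$ I put the triangle path $\{\,\{v_{i-1},v_i,v_{i+1}\}\ :\ 1\le i\le t-1\,\}$, whose consecutive members realise the interior edges $\{v_i,v_{i+1}\}$ for $1\le i\le t-2$; the end edges $\{v_0,v_1\}$ and $\{v_{t-1},v_t\}$ are then already realised together with the fan at $v_0$ (resp.\ $v_t$) when that end is a branch vertex, and when an end, say $v_t$, is a leaf I add a closing hyperedge $\{v_{t-1},v_t,v_{t-4}\}$ (mirrored to $\{v_0,v_1,v_4\}$ if $v_0$ is a leaf too), exactly as in the seven-hyperedge representation of $P_7$. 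Finally I verify (i) and (ii) by running through the finitely many types of pair — fan/fan, fan/triangle, triangle/triangle, closing/anything, and hyperedges attached to adjacent branch vertices — and confirming every $2$-element overlap is an edge.

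The step I expect to be the real obstacle is the interface analysis for short threads and for threads hanging off a leaf: the closing hyperedge $\{v_{t-1},v_t,v_{t-4}\}$ needs its third vertex to exist and to avoid both the fan at the branch end and the mirror closing hyperedge at the other end — and it is exactly here that the construction breaks for $P_2,P_3,P_4,P_5,P_6$ (both ends leaves, too little room) and for $T7$ and $T12$ (a leaf thread at a branch vertex all of whose third-vertex choices are ``crowded''), which is how the seven exceptions arise. For completeness one must also check, by a short exhaustive search over the $3$-uniform hypergraphs on $\le 6$ vertices, that these seven trees admit no representation at all; I would package that as a separate converse lemma. (An alternative that trades the interface bookkeeping for induction overhead: induct on $|V(T)|$, handling all stars $K_{1,m}$ directly by the cyclic fan on their leaves, taking all non-exceptional trees on $\le 7$ vertices as base cases, and for $n\ge 8$ deleting a leaf $\ell$ with neighbour $u$ — so $T-\ell$ is non-exceptional, hence representable — and re-attaching $\ell$ via two hyperedges $\{u,\ell,w\}$, $\{u,\ell,b\}$ with $w$ a tree-neighbour of $u$ and $b$ a vertex sharing no hyperedge with $u$, maintained available by the invariant that every leaf lies in exactly two hyperedges, both through its neighbour.)
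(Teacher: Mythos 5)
First, a point of reference: this paper does not prove the statement at all. Theorem 1 is imported verbatim as Theorem 6 of \cite{ST2019_1} and used as a black box throughout, so there is no in-paper proof to compare your attempt against; I can only assess the proposal on its own terms.

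Your architecture (cyclic fans at branch vertices, triangle-paths along threads, extra ``closing'' hyperedges at leaves) is sensible, and the local verifications you describe for fans, for fan/thread interfaces, and for long threads are correct; the $P_7$ instance checks out. The genuine gap sits exactly where you say you ``expect the real obstacle'' to be, and you do not close it. The closing hyperedge $\{v_{t-1},v_t,v_{t-4}\}$ is only defined when the leaf lies at distance at least $4$ from the far end of its thread; for the bulk of non-exceptional trees (any tree with a leaf within distance $3$ of a branch vertex --- e.g.\ the spider with three legs of length $2$, or the tree obtained from $P_4$ by attaching two extra leaves at one end) the third vertex must instead be chosen ad hoc, subject to sharing no hyperedge with $v_{t-1}$ or $v_t$ and subject to distinct closing hyperedges not meeting each other in a non-edge pair. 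That last clash is exactly what happens for $T12$ under your scheme (the only admissible closings are $\{v_3,v_4,v_6\}$ and $\{v_4,v_5,v_6\}$, which intersect in the non-edge $\{v_4,v_6\}$), and the non-existence of any admissible third vertex is what happens for $T7$. Proving that admissible, mutually compatible choices always exist \emph{outside} the seven listed trees is the actual content of the theorem, and the proposal replaces this case analysis with the expectation that ``this is how the seven exceptions arise.'' The same gap reappears in your inductive variant: the invariant that every leaf lies in exactly two hyperedges does not by itself guarantee a vertex $b$ sharing no hyperedge with $u$. Finally, the non-representability of the seven exceptional trees is only delegated to an unexecuted exhaustive search. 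So the skeleton is plausible, but the decisive step is missing.
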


Note that Theorem 1 characterizes the cycle-free cacti being edge intersection hypergraphs of 3-uniform hypergraphs.
The following characterization of the cycles which are edge intersection hypergraphs of 3-uniform hypergraphs is very simple to verify.

\begin{theorem}(Corollary 1 in \cite{ST2019_1}).
For $n \ge 5$ the cycle $C_n$ is an edge intersection hypergraph of a 3-uniform hypergraph.
\end{theorem}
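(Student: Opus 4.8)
The plan is to give an explicit $3$-uniform hypergraph $\cH=(V,\cE)$ on the vertex set $V=V(C_n)=\{v_1,\dots,v_n\}$ (from now on all indices are read modulo $n$) whose edge intersection hypergraph is exactly $C_n$. The natural candidate is the hypergraph of \emph{consecutive triples}: take $\cE=\{e_1,\dots,e_n\}$ with $e_i=\{v_{i-1},v_i,v_{i+1}\}$. For $n\ge 5$ every such $e_i$ consists of three pairwise distinct vertices, and distinct indices $i\neq j$ yield distinct triples, so $\cH$ is a well-defined $3$-uniform hypergraph on the correct vertex set; the remaining task is to compute $EI(\cH)$.

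The key step is to evaluate every pairwise intersection $e_i\cap e_j$ ($i\neq j$) according to the cyclic distance $d$ of the indices $i$ and $j$. For $d=1$ one obtains $e_i\cap e_{i+1}=\{v_i,v_{i+1}\}$, which is precisely the edge of $C_n$ joining $v_i$ and $v_{i+1}$; letting $i$ run over all residues, this already realises \emph{every} edge of $C_n$. For $d=2$ one gets $e_i\cap e_{i+2}=\{v_{i+1}\}$, a single vertex, so it contributes nothing to $\eiE$. For $d\ge 3$ the index windows $\{i-1,i,i+1\}$ and $\{j-1,j,j+1\}$ are disjoint, whence $e_i\cap e_j=\emptyset$. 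Collecting the cases, $\eiE=\{\,e_i\cap e_j : |e_i\cap e_j|\ge 2\,\}=\{\{v_i,v_{i+1}\}:1\le i\le n\}=E(C_n)$, i.e. $EI(\cH)=C_n$.

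The single point that requires care rather than being purely mechanical is the $d=2$ case together with the role of the hypothesis $n\ge 5$: the identities above use that the indices $i-1,i,i+1,i+2,i+3$ are pairwise incongruent modulo $n$, which fails for $n\in\{3,4\}$ — for $n=4$ the triples $e_i$ and $e_{i+2}$ would share the two vertices $v_{i+1}$ and $v_{i-1}=v_{i+3}$, producing the spurious diagonal $\{v_{i-1},v_{i+1}\}\notin E(C_4)$, and for $n=3$ all three triples coincide. For $n=5$ the only occurring cyclic distances are $1$ and $2$, so only the first two cases arise; for $n\ge 6$ all three cases occur and behave as described. Hence the construction works uniformly for every $n\ge 5$, which establishes the statement.
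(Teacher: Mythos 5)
Your construction via consecutive triples $e_i=\{v_{i-1},v_i,v_{i+1}\}$ is correct, and your case analysis by cyclic index distance (including the explanation of why $n\ge 5$ is exactly what rules out the spurious intersections arising for $n\in\{3,4\}$) is complete. The paper itself gives no proof of this statement --- it imports it as Corollary~1 of \cite{ST2019_1} with the remark that it is ``very simple to verify'' --- and your argument is precisely the natural verification that remark alludes to, so nothing further is needed.
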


Using the clique-fusion, we prove that cacti having either a circumference of at least 5 or containing (in the decomposition mentioned above) a tree $T \notin  \{ P_1,P_2, P_3, P_4, P_5, P_6, T7, T12 \}$ are edge intersection hypergraphs of 3-uniform hypergraphs.

\smallskip
At the end of the introduction, let us mention a  tool from \cite{ST2019_1}, which is useful for the investigation of small examples. For this end let $G=(V,E)$ and $\cH =(V, \cE)$ be a graph and a hypergraph, respectively, having one and the same vertex set $V$.
The verification of $\cE(EI(\cH)) = E(G)$ can be done  by hand
 or by computer, e.g. using the computer algebra system MATHEMATICA
 \cite{WMath} with the function
\\[0.8ex]
\indent
$EEI[eh\_] :=
 Complement[
  Select[Union[Flatten[Outer[Intersection, eh, eh, 1], 1]],$
\vspace{-1ex}
\begin{flushright} $   Length[\#] > 1 \&], eh],$ \end{flushright}

\smallskip
\noindent
where the argument $eh$ has to be the list of the hyperedges of $\cH$ in the form $\{\{a,b,c \}, \ldots, \{x,y,z \} \}$. Then $EEI[eh]$ provides the list of the hyperedges of $EI(\cH)$.

\section{The clique-fusion and  trees and cycles}


Let $r \ge 2$ and $G_1 = (V_1, E_1), G_2 = (V_2, E_2), \ldots, G_r = (V_r, E_r)$ be graphs.
Moreover, let $k \ge 1$, $V' = \{ v_1, \ldots, v_k \} := \{ v \, | \, \exists i, j \in \{1, \ldots, r \}: \, i \neq j \, \wedge \, v \in  V_i \cap V_ j \}$  and \\
 $E' := \{ \{v, v' \} \, | \, v, v' \in V'\, \wedge \, \exists i \in  \{1, \ldots r \}: \, \{v, v' \} \in E_i \}$.

\smallskip
Incidentally, if for each $i \in \{1, \ldots, r \}$ the graph $G_i$ is connected and $V_i \cap V' \neq \emptyset$, then
the union $G_1 \cup \ldots \cup G_r = ( V_1 \cup \ldots \cup V_r , E_1 \cup \ldots \cup E_r )$ is connected, too.

\smallskip
Consider the case that $E' = \{ \{ v_i, v_j \} \, | \, 1 \le i < j \le k \}$, i.e. the subgraph $ \langle V' \rangle_{G_1 \cup \ldots \cup G_r}$ induced by the vertices of $V'$ in $G_1 \cup \ldots \cup G_r$ is a $k$-clique.
Then we refer to the union $G_1 \cup \ldots \cup G_r$ as the {\em clique-fusion} or {\em $k$-fusion}
of the graphs $G_1 \cup \ldots \cup G_r$ and write $G_1 \oplus \ldots \oplus G_r := G_1 \cup \ldots \cup G_r$.

\smallskip
For an example, consider three graphs $G_1 = (V_1, E_1)$, $G_2 = (V_2, E_2)$ and $G_3 = (V_3, E_3)$, where $\{x, y \} \in E_1$, $\{y, z \} \in E_2$ and $\{x, z \} \in E_3$ are edges as well as
$V_1 \cap V_2 = \{y \}$, $V_2 \cap V_3 = \{z \}$ and $V_1 \cap V_3 = \{x \}$ hold. Then $V' = \{ x,y,z \}$ induces a 3-clique $ \langle \{ x,y,z \} \rangle_{G_1 \cup G_2 \cup G_3}$ in $ G_1 \cup G_2 \cup G_3$ and $ G_1 \oplus G_2 \oplus G_3 =  G_1 \cup G_2 \cup G_3 $ is the $3$-fusion of the graphs $G_1, G_2$ and $G_3$ (see Fig.2).

\providecolor{black}{rgb}{0,0,0}
	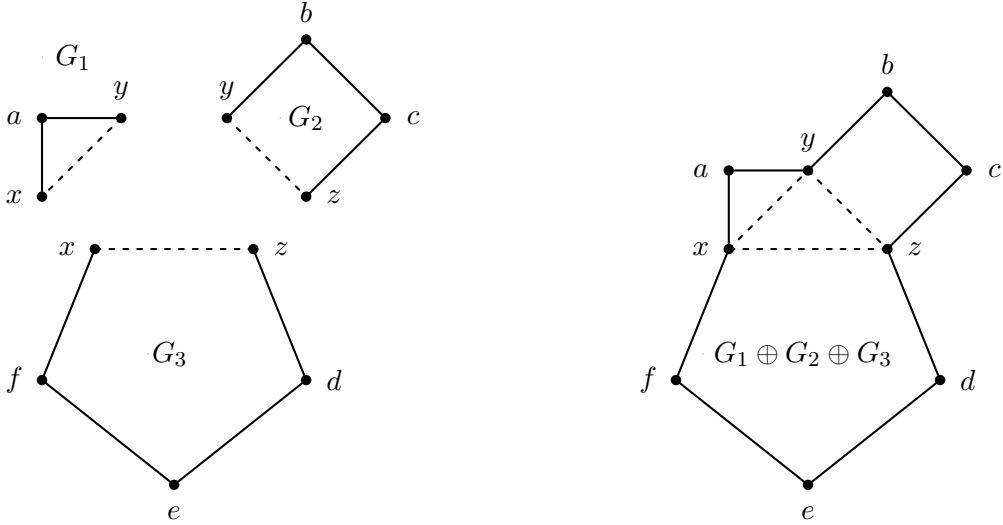
\begin{figure}[h]
	\centering
	\begin{tikzpicture}


		\path[solid,line cap=round,draw=black, line width=0.02742616033755274cm] (10.49507735583685,4.653305203938115) --  (9.452883263009845,4.653305203938115);

		\path[solid,line cap=round,draw=black, line width=0.02742616033755274cm] (9.452883263009845,4.653305203938115) --  (9.452883263009845,3.611111111111111);

		\path[ dash pattern=on 0.0639943741209564cm off 0.11884669479606189cm,line cap=round,draw=black, line width=0.02742616033755274cm] (9.452883263009845,3.611111111111111) --  (10.49507735583685,4.653305203938115);

		\path[solid,line cap=round,draw=black, line width=0.02742616033755274cm] (11.537271448663853,5.695499296765119) --  (10.49507735583685,4.653305203938115);

		\path[solid,line cap=round,draw=black, line width=0.02742616033755274cm] (12.579465541490858,4.653305203938115) --  (11.537271448663853,5.695499296765119);

		\path[solid,line cap=round,draw=black, line width=0.02742616033755274cm] (11.537271448663853,3.611111111111111) --  (12.579465541490858,4.653305203938115);

		\path[ dash pattern=on 0.0639943741209564cm off 0.11884669479606189cm,line cap=round,draw=black, line width=0.02742616033755274cm] (10.49507735583685,4.653305203938115) --  (11.537271448663853,3.611111111111111);

		\path[ dash pattern=on 0.0639943741209564cm off 0.11884669479606189cm,line cap=round,draw=black, line width=0.02742616033755274cm] (11.537271448663853,3.611111111111111) --  (9.452883263009845,3.611111111111111);

		\path[solid,line cap=round,draw=black, line width=0.02742616033755274cm] (11.537271448663853,3.611111111111111) --  (12.232067510548523,1.8741209563994374);

		\path[solid,line cap=round,draw=black, line width=0.02742616033755274cm] (12.232067510548523,1.8741209563994374) --  (10.49507735583685,0.48452883263009844);

		\path[solid,line cap=round,draw=black, line width=0.02742616033755274cm] (8.758087201125175,1.8741209563994374) --  (10.49507735583685,0.48452883263009844);

		\path[solid,line cap=round,draw=black, line width=0.02742616033755274cm] (8.758087201125175,1.8741209563994374) --  (9.452883263009845,3.611111111111111);

		\path[solid,line cap=round,draw=black, line width=0.02742616033755274cm] (1.4627285513361463,5.348101265822785) --  (0.42053445850914206,5.348101265822785);

		\path[solid,line cap=round,draw=black, line width=0.02742616033755274cm] (0.42053445850914206,4.305907172995781) --  (0.42053445850914206,5.348101265822785);

		\path[solid,line cap=round,draw=black, line width=0.02742616033755274cm] (4.936708860759493,5.348101265822785) --  (3.894514767932489,6.390295358649789);

		\path[ dash pattern=on 0.0639943741209564cm off 0.11884669479606189cm,line cap=round,draw=black, line width=0.02742616033755274cm] (2.852320675105485,5.348101265822785) --  (3.894514767932489,4.305907172995781);

		\path[solid,line cap=round,draw=black, line width=0.02742616033755274cm] (2.852320675105485,5.348101265822785) --  (3.894514767932489,6.390295358649789);

		\path[solid,line cap=round,draw=black, line width=0.02742616033755274cm] (3.894514767932489,4.305907172995781) --  (4.936708860759493,5.348101265822785);

		\path[solid,line cap=round,draw=black, line width=0.02742616033755274cm] (3.19971870604782,3.611111111111111) --  (3.894514767932489,1.8741209563994374);

		\path[ dash pattern=on 0.0639943741209564cm off 0.11884669479606189cm,line cap=round,draw=black, line width=0.02742616033755274cm] (3.19971870604782,3.611111111111111) --  (1.1153305203938115,3.611111111111111);

		\path[solid,line cap=round,draw=black, line width=0.02742616033755274cm] (2.157524613220816,0.48452883263009844) --  (3.894514767932489,1.8741209563994374);

		\path[solid,line cap=round,draw=black, line width=0.02742616033755274cm] (2.157524613220816,0.48452883263009844) --  (0.42053445850914206,1.8741209563994374);

		\path[solid,line cap=round,draw=black, line width=0.02742616033755274cm] (0.42053445850914206,1.8741209563994374) --  (1.1153305203938115,3.611111111111111);

		\path[ dash pattern=on 0.0639943741209564cm off 0.11884669479606189cm,line cap=round,draw=black, line width=0.02742616033755274cm] (1.4627285513361463,5.348101265822785) --  (0.42053445850914206,4.305907172995781);

		\fill[fill=black] (9.452883263009845,4.653305203938115) circle (0.06856540084388185);
		\pgftext[x=9.087201125175808cm,y=4.653305203938115cm]{$a$}

		\fill[fill=black] (10.49507735583685,4.653305203938115) circle (0.06856540084388185);
		\pgftext[x=10.49507735583685cm,y=5.018987341772151cm]{$y$}

		\fill[fill=black] (9.452883263009845,3.611111111111111) circle (0.06856540084388185);
		\pgftext[x=9.087201125175808cm,y=3.611111111111111cm]{$x$}

		\fill[fill=black] (11.537271448663853,3.611111111111111) circle (0.06856540084388185);
		\pgftext[x=11.90295358649789cm,y=3.611111111111111cm]{$z$}

		\fill[fill=black] (11.537271448663853,5.695499296765119) circle (0.06856540084388185);
		\pgftext[x=11.537271448663853cm,y=6.061181434599155cm]{$b$}

		\fill[fill=black] (12.579465541490858,4.653305203938115) circle (0.06856540084388185);
		\pgftext[x=12.945147679324895cm,y=4.653305203938115cm]{$c$}

		\fill[fill=black] (8.758087201125175,1.8741209563994374) circle (0.06856540084388185);
		\pgftext[x=8.392405063291138cm,y=1.8741209563994374cm]{$f$}

		\fill[fill=black] (12.232067510548523,1.8741209563994374) circle (0.06856540084388185);
		\pgftext[x=12.59774964838256cm,y=1.8741209563994374cm]{$d$}

		\fill[fill=black] (10.49507735583685,0.48452883263009844) circle (0.06856540084388185);
		\pgftext[x=10.49507735583685cm,y=0.11884669479606186cm]{$e$}

		\fill[fill=black] (3.894514767932489,1.8741209563994374) circle (0.06856540084388185);
		\pgftext[x=4.260196905766525cm,y=1.8741209563994374cm]{$d$}

		\fill[fill=black] (0.42053445850914206,1.8741209563994374) circle (0.06856540084388185);
		\pgftext[x=0.05485232067510548cm,y=1.8741209563994374cm]{$f$}

		\fill[fill=black] (2.157524613220816,0.48452883263009844) circle (0.06856540084388185);
		\pgftext[x=2.157524613220816cm,y=0.11884669479606186cm]{$e$}

		\fill[fill=black] (3.19971870604782,3.611111111111111) circle (0.06856540084388185);
		\pgftext[x=3.5654008438818567cm,y=3.611111111111111cm]{$z$}

		\fill[fill=black] (1.1153305203938115,3.611111111111111) circle (0.06856540084388185);
		\pgftext[x=0.749648382559775cm,y=3.611111111111111cm]{$x$}

		\fill[fill=black] (0.42053445850914206,4.305907172995781) circle (0.06856540084388185);
		\pgftext[x=0.05485232067510548cm,y=4.305907172995781cm]{$x$}

		\fill[fill=black] (0.42053445850914206,5.348101265822785) circle (0.06856540084388185);
		\pgftext[x=0.05485232067510548cm,y=5.348101265822785cm]{$a$}

		\fill[fill=black] (1.4627285513361463,5.348101265822785) circle (0.06856540084388185);
		\pgftext[x=1.4627285513361463cm,y=5.713783403656821cm]{$y$}

		\fill[fill=black] (3.894514767932489,4.305907172995781) circle (0.06856540084388185);
		\pgftext[x=4.260196905766525cm,y=4.305907172995781cm]{$z$}

		\fill[fill=black] (2.852320675105485,5.348101265822785) circle (0.06856540084388185);
		\pgftext[x=2.852320675105485cm,y=5.713783403656821cm]{$y$}

		\fill[fill=black] (3.894514767932489,6.390295358649789) circle (0.06856540084388185);
		\pgftext[x=3.894514767932489cm,y=6.755977496483825cm]{$b$}

		\fill[fill=black] (4.936708860759493,5.348101265822785) circle (0.06856540084388185);
		\pgftext[x=5.30239099859353cm,y=5.348101265822785cm]{$c$}

		\fill[fill=black] (0.42053445850914206,6.152601969057665) circle (0.0);
		\pgftext[x=0.8319268635724332cm,y=6.152601969057665cm]{$G_1$}

		\fill[fill=black] (3.5471167369901546,5.348101265822785) circle (0.0);
		\pgftext[x=3.894514767932489cm,y=5.348101265822785cm]{$G_2$}

		\fill[fill=black] (1.810126582278481,2.221518987341772) circle (0.0);
		\pgftext[x=2.10267229254571cm,y=2.221518987341772cm]{$G_3$}

		\fill[fill=black] (9.10548523206751,2.221518987341772) circle (0.0);
		\pgftext[x=10.431082981715893cm,y=2.221518987341772cm]{$G_1 \oplus G_2 \oplus G_3$}
	\end{tikzpicture}

		\caption{Three graphs and their  clique-fusion}
	\end{figure}

Note that we obtain the same $3$-fusion taking the modified graphs $G'_1 = (V_1 \cup \{ z \}, E_1 \cup \{ \{x, z \}, \{y, z \} \})$ and $G'_3 = (V_3, E_3 \setminus \{ \{x, z \} \})$ instead of $G_1$ and $G_3$, i.e. we have $ G_1 \oplus G_2 \oplus G_3 = G'_1 \oplus G_2 \oplus G'_3$.

Using the above notations  we have a look at a special situation. \\[1ex]
{\bf Special Case 1.}
$\quad \forall i,j \in \{ 1, \ldots, r \}: \; V' = V_i \cap V_j \; \wedge \; \langle V' \rangle_{G_i} \mbox{ is a $k$-clique}.$

\medskip
In this case, all graphs $G_i, G_j$ $(i \neq j)$ have all the vertices $v_1, \ldots, v_k$ (and only these vertices) in common. Additionally, in each $G_i$ $( i \in \{ 1, \dots, k \})$ (as well as in $ \langle V' \rangle_{G_1 \cup \ldots \cup G_r}$) the vertices  $v_1, \ldots, v_k$ induce one and the same $k$-clique.

Let us mention two further special cases; the first one corresponds to $k=1$ and the second one  to $r=2$, respectively.\\[1ex]
{\bf Special Case 2.}
$\quad \forall i,j \in \{ 1, \ldots, r \}: \; V' = \{ v \} = V_i \cap V_j ,$
where $v$ is a uniquely determined vertex. \\[1ex]
{\bf Special Case 3.}
$\quad r=2$, i.e. we consider the clique-fusion $ G_1 \oplus G_2$ of two graphs.

\medskip
Investigating cacti, we only need Special Case 2 and Special Case 3 in combination, i.e. we have $k=1$ as well as $r=2$. Only for proof-technical reasons, in  very few exceptions we use the 2-fusion (see $G_1 \oplus K_{1,3}$ in the part (b) of the proof of Theorem 4).

\medskip

\begin{remark}
The clique-fusion can be easily generalized to pairwise vertex-disjoint graphs $G_1 = (V_1, E_1), G_2 = (V_2, E_2), \ldots, G_r = (V_r, E_r)$ by identifying certain vertices $v^i \in V_i$ and $v^j \in V_j \,$: $v^i \equiv v^j$, where  $1 \le i < j \le r$. Obviously, also the edges $\{ v_1^i, v_2^i \} \in E_i$ and  $\{ v_1^j, v_2^j \} \in E_j$ have to be identified ($\{ v_1^i, v_2^i \} \equiv \{ v_1^j, v_2^j \}$), if the corresponding vertices have been identified ($v_1^i \equiv v_1^j$ and $v_2^i \equiv v_2^j$).
\end{remark}

In the following, we will only make use of the clique-fusion in its original form, not in the generalized sense described in Remark 1.

Now we prove that the clique-fusion of graphs
which are edge intersection hypergraphs of 3-uniform hypergraphs
is an edge intersection hypergraph of a 3-uniform hypergraph,
too.

\begin{theorem}
Let $G = (V, E)$ be the clique-fusion $G_1 \oplus \ldots \oplus G_r$ of graphs $G_1 = (V_1, E_1),  \ldots, G_r = (V_r, E_r)$, where $G_1 = EI(\cH_1), \ldots, G_r = EI(\cH_r)$ are edge intersection hypergraphs of the 3-uniform hypergraphs $\cH_1 = (V_1, \cE_1), \ldots, \cH_r = ( V_r, \cE_r)$.

Then $\cH = \cH_1 \cup \ldots \cup \cH_r$ is 3-uniform and $G = EI(\cH)
 $.
\end{theorem}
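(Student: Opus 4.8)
The plan is to check the two assertions separately; the first is immediate and the second reduces to a short case distinction on pairs of hyperedges. Since each $\cE_i$ consists of $3$-element sets, so does $\cE(\cH)=\cE_1\cup\ldots\cup\cE_r$, hence $\cH$ is $3$-uniform. Two distinct hyperedges of a $3$-uniform hypergraph meet in at most $2$ vertices, so every hyperedge of $EI(\cH)$ has cardinality exactly $2$; in particular $EI(\cH)$ is a graph, which makes the equality $G=EI(\cH)$ meaningful. As $EI$ leaves the vertex set unchanged, $V(EI(\cH))=V(\cH)=V_1\cup\ldots\cup V_r=V(G)$, so it remains to prove $\cE(EI(\cH))=E$.

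For $E\subseteq\cE(EI(\cH))$, take $\{u,w\}\in E=E_1\cup\ldots\cup E_r$, say $\{u,w\}\in E_i$. Since $G_i=EI(\cH_i)$, there are $e_1,e_2\in\cE_i$ with $e_1\neq e_2$ and $e_1\cap e_2=\{u,w\}$; these also belong to $\cE(\cH)$, so $\{u,w\}\in\cE(EI(\cH))$. For the reverse inclusion, let $f=e_1\cap e_2$ be a hyperedge of $EI(\cH)$, with $e_1,e_2\in\cE(\cH)$, $e_1\neq e_2$ and $|f|=2$. If $e_1,e_2\in\cE_i$ for some $i$, then $f\in\cE(EI(\cH_i))=E_i\subseteq E$. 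Otherwise choose $i,j$ with $e_1\in\cE_i$ and $e_2\in\cE_j$; then necessarily $i\neq j$, whence $f=e_1\cap e_2\subseteq V_i\cap V_j\subseteq V'$ by the definition of $V'$, so both vertices of $f$ lie in $V'$; since $\langle V'\rangle_{G_1\cup\ldots\cup G_r}$ is a $k$-clique we conclude $f\in E$. Thus $\cE(EI(\cH))=E$ and $G=EI(\cH)$.

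I do not expect a genuine obstacle here; the argument is essentially bookkeeping. The only step that really uses the clique-fusion hypothesis, rather than merely that $G$ is the union of the $G_i$, is the cross case $e_1\in\cE_i$, $e_2\in\cE_j$ with $i\neq j$ in the second inclusion: it is precisely the completeness of $\langle V'\rangle$ that keeps such ``mixed'' intersections from producing edges outside $E$. One should also keep in mind the (harmless) possibility that one and the same $3$-set lies in several $\cE_i$ simultaneously — which can happen only if it is contained in $V'$ — and this is why the case distinction is phrased as ``there exists an $i$ with $e_1,e_2\in\cE_i$'' versus its negation, rather than as a split along a partition of $\cE(\cH)$.
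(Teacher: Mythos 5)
Your proof is correct and follows essentially the same route as the paper's: trivial 3-uniformity, the forward inclusion via the defining pairs in a single $\cE_i$, and the reverse inclusion via the case split on whether both hyperedges lie in a common $\cE_i$, with the clique condition on $\langle V'\rangle$ handling the mixed case. No substantive differences.
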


\begin{proof}
The 3-uniformity of $\cH = (V, \cE)$ is trivial because of $V = V_1 \cup \ldots \cup V_r$ and $\cE = \cE_1 \cup \ldots \cup \cE_r$. Owing to $G = G_1 \oplus \ldots \oplus G_r = G_1 \cup \ldots \cup G_r = EI(\cH_1) \cup \ldots \cup EI(\cH_r)$ in addition to $V = V_1 \cup \ldots \cup V_r $ we have $E = E_1 \cup \ldots \cup E_r = \cE(EI(\cH_1)) \cup \ldots \cup \cE(EI(\cH_r))$ and it suffices to show $ \cE(EI(\cH)) =
\cE(EI(\cH_1)) \cup \ldots \cup \cE(EI(\cH_r))$.
(Note that, in general, neither $V_1, \ldots, V_r$ nor $E_1 = \cE(EI(\cH_1)), \ldots, E_r = \cE(EI(\cH_r))$ are pairwise distinct.)

\bigskip
\ul{Part 1: $\cE(EI(\cH))  \supseteq \cE(EI(\cH_1)) \cup \ldots \cup \cE(EI(\cH_r))$}. \\[0.5ex]
We consider an arbitrarily chosen  edge $\{ x, y \} \in \cE(EI(\cH_i))$, where $ i \in \{ 1, \ldots, r \}$. Then there are hyperedges $e_i, e_i' \in \cE(\cH_i) \subseteq \cE(\cH)$ with $ e_i \cap e'_i = \{ x, y \}$. This implies $\{ x, y \} \in \cE(EI(\cH))$.

\bigskip
\ul{Part 2: $\cE(EI(\cH))  \subseteq \cE(EI(\cH_1)) \cup \ldots \cup \cE(EI(\cH_r))$}. \\[0.5ex]
Let $e, e' \in \cE(\cH)$ with $ e \cap e' \in \cE(EI(\cH))$. The 3-uniformity of $\cH$ includes $| e \cap e'| = 2$.

Assume, there exists an $i \in \{1, \ldots, r \}$ such that $e, e' \in \cE(\cH_i)$. Then $e \cap e' \in \cE(EI(\cH_i)) \subseteq \cE(EI(\cH_1)) \cup \ldots \cup \cE(EI(\cH_r))$.

Otherwise, for all $i, j \in \{ 1, \ldots, r \}$ from $e \in \cE(\cH_i)$ and  $e' \in \cE(\cH_j)$ we get $i \neq j$. In this case we have $e \cap e' = \{ x, y \}$, with $\{ x, y \}  \subseteq V_i \cap V_j$ and $x \neq y$. Since $G$ is the clique-fusion $G_1 \oplus \ldots \oplus G_r$, the vertices $x$ and $y$ have to be adjacent in $G$ and, therefore, $e \cap e' = \{ x, y \} \in E(G) = E = E_1 \cup \ldots \cup E_r = \cE(EI(\cH_1)) \cup \ldots \cup \cE(EI(\cH_r))$. \\[0.5ex]
(Note that because of  $e \cap e' = \{ x, y \} \in  E_1 \cup \ldots \cup E_r$ there exist an $l \in \{1, \ldots, r \}$ and hyperedges $e_l, e_l' \in \cE(\cH_l)$ with $e_l \cap e_l' = \{ x, y \}$. Therefore, we even get $e \cap e' = e_l \cap e_l' \in \cE(EI(\cH_l)) = E_l$.)\hqed

\end{proof}

Let the graph $G_1$ be the edge intersection hypergraph of a 3-uniform hypergraph $\cH_1$. Besides the clique-fusion of  $G_1$ and a graph $G_2$ being the edge intersection hypergraph of a second 3-uniform hypergraph $\cH_2$ (see Theorem 3), we will need also the clique-fusion of $G_1$ with arbitrary trees (cf. Theorem 4) and cycles (cf. Theorem 5), respectively.

\begin{theorem}
Let the graph $G_1 = (V_1, E_1)$ be the edge intersection hypergraph of a 3-uniform hypergraph $\cH_1=(V_1, \cE_1)$, $v \in V_1$ with $d_{G_1}(v) \ge 2$ and $T = (V_2, E_2)$  be a tree with  $ V_2 = \{ v_1, v_2, \ldots, v_{n} \}$, where $n \ge 2$. Moreover, let $V_1 \cap V_2 = \{ v \}$.
Then the 1-fusion $G_1 \oplus T$ is an edge intersection hypergraph of a 3-uniform hypergraph.
\end{theorem}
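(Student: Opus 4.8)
The plan is to build a 3-uniform hypergraph $\cH$ on $V = V_1 \cup V_2$ whose edge intersection hypergraph is exactly $G_1 \oplus T$, by starting from $\cH_1$ and adjoining hyperedges that ``represent'' the tree $T$ hanging off the vertex $v$. Since $d_{G_1}(v) \ge 2$, there are two hyperedges $e, e' \in \cE_1$ with $e \cap e' \ni v$; the idea is to reuse the structure already present at $v$ in $\cH_1$ as an anchor. First I would root $T$ at $v$ and try to realize $T$ itself (or $T$ with the vertex $v$ playing a boundary role) as an edge intersection hypergraph. The natural candidate construction: for each edge $\{a,b\}$ of $T$, introduce a pair of hyperedges whose pairwise intersection is precisely $\{a,b\}$, using fresh ``dummy'' vertices so that no two of these new hyperedges meet in a set of size $\ge 2$ except in the intended edges of $T$. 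Because $T$ is a tree, the intended intersections never need to agree on a triangle, so there is real freedom here.

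The key steps, in order: (1) handle the easy base cases where $T$ is one of the seven exceptional small trees $P_2, P_3, P_4, P_5, P_6, T7, T12$ (the cases Theorem 1 excludes) — for these $T$ is small and one exhibits an explicit 3-uniform $\cH$ realizing $G_1 \oplus T$ by hand or via the \texttt{EEI} routine, exploiting that $v$ has degree $\ge 2$ in $G_1$ so extra hyperedges are available at $v$; in particular the 2-fusion $G_1 \oplus K_{1,3}$ remark flagged in the introduction will be used to bridge one of these exceptional configurations. (2) For $T \notin \{P_1,\dots,P_6,T7,T12\}$, invoke Theorem 1 to get a 3-uniform $\cH_2^0$ with $EI(\cH_2^0) = T$ on vertex set $V_2$; but this $\cH_2^0$ need not interact well with $\cH_1$ at $v$. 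So I would instead take a 3-uniform hypergraph $\cH_2$ realizing a slightly larger tree, e.g.\ $T$ with one extra pendant vertex or pendant path attached at $v$ (which is again a non-exceptional tree, since adding vertices to a large tree keeps it large), then delete the hyperedges that produced the spurious pendant edge — this is the standard trick for getting a ``clean'' realization in which $v$ is incident to exactly the hyperedges one wants. (3) Form $\cH = \cH_1 \cup \cH_2$ and verify $EI(\cH) = G_1 \oplus T$: the $\supseteq$ direction is immediate since intersections internal to $\cH_1$ or to $\cH_2$ survive; for $\subseteq$ one must show that a hyperedge $e \in \cE_1$ and a hyperedge $e' \in \cE_2$ cannot intersect in a 2-set. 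Since $V_1 \cap V_2 = \{v\}$, any such intersection has size $\le 1$, so cross intersections contribute nothing — this is exactly the mechanism that made Theorem 3 work, and it applies verbatim here once the vertex sets are arranged to meet only in $v$.

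The main obstacle, and where most of the work lies, is step (2): producing a realization $\cH_2$ of $T$ (on $V_2$, with $v$ among its vertices) in which the set of hyperedges incident to $v$ is small and controlled — ideally just the one or two hyperedges forced by the edges of $T$ at $v$ — so that when fused with $\cH_1$ no new 2-element intersection is created at $v$ and, conversely, the degree-$\ge 2$ condition on $v$ in $G_1$ is not needed to re-create any edge of $T$ incident to $v$. The generic Theorem 1 construction does not come with such a guarantee, so one genuinely needs the ``add a pendant, then delete'' refinement, and one must check that the pendant-augmented tree is still non-exceptional (true except possibly for a few small $T$, which is why the exceptional-cases analysis in step (1) has to absorb those). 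A secondary but routine point is bookkeeping the dummy vertices: each pair of hyperedges realizing an edge of $T$ should use private dummies so that the only intersections of size $\ge 2$ among $\cE_2$ are the edges of $T$, and none of these dummies lies in $V_1$; since we are free to choose $V_2 \setminus \{v\}$ disjoint from $V_1$, this is automatic. Once these pieces are in place, the verification $EI(\cH) = G_1 \oplus T$ follows the same two-part ($\supseteq$, $\subseteq$) pattern as the proof of Theorem 3.
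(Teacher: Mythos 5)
Your overall skeleton matches the paper's: dispose of all non-exceptional trees via Theorems 1 and 3, and treat the seven exceptional trees separately using the structure of $G_1$ at $v$. But the proposal misplaces the difficulty and leaves the real work undone. Your ``main obstacle'' in step (2) --- producing a realization of $T$ whose hyperedges at $v$ are controlled, via an ``add a pendant, then delete'' refinement --- is not an obstacle at all. As you yourself observe in step (3), for a 1-fusion we have $V_1 \cap V_2 = \{v\}$, so a hyperedge of $\cH_1$ and a hyperedge of $\cH_2$ meet in at most one vertex and can never contribute an edge to $EI(\cH_1 \cup \cH_2)$. Hence for $T \notin \{P_2,\dots,P_6,T7,T12\}$ the generic realization from Theorem 1 combined with Theorem 3 finishes immediately; no pendant trick and no control over the hyperedges at $v$ is needed. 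This is exactly what the paper does (``there is nothing to show'').

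The actual content of the theorem is the seven exceptional trees, and there your step (1) is a placeholder rather than a proof. These trees are precisely the ones that are \emph{not} edge intersection hypergraphs of any 3-uniform hypergraph, so no hypergraph on $V_2$ alone can realize them; the auxiliary third vertices of the new hyperedges must be borrowed from $V_1$. This is where $d_{G_1}(v) \ge 2$ enters: it supplies neighbours $u,w$ of $v$ so that, e.g., $\{u,v_1,v_2\} \cap \{w,v_1,v_2\} = \{v_1,v_2\}$ produces the pendant edge of $P_2$ while the only other 2-intersections these hyperedges can form, namely $\{u,v_1\}$ and $\{w,v_1\}$, are already edges of $G_1$. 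Your ``fresh private dummies \dots none of these dummies lies in $V_1$'' cannot work as stated: $EI(\cH)$ has the same vertex set as $\cH$, so genuinely new dummy vertices would change the vertex set away from $V_1 \cup V_2$, and $V_2$ itself is too small to supply them for, say, $P_2$. Moreover, since $G_1$ and $\cH_1$ are arbitrary, ``verify by hand or via the EEI routine'' is not available; one must write down explicit hyperedge lists for each exceptional $T$ and each possible position of $v$ in $T$ (the paper reduces inner-vertex attachments of paths to end-vertex attachments, and $T7$, $T12$ to $P_4 \oplus P_t$), and then argue in general that the borrowed $V_1$-vertices create no edges not already in $E_1$. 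That argument is the proof, and the proposal does not contain it.
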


\begin{proof}
If $T$ is an edge intersection hypergraph of a 3-uniform hypergraph $\cH_2=(V_2, \cE_2)$, owing to Theorem 3 there is nothing to show. So we have to consider only the exceptional trees $P_2, P_3, P_4, P_5, P_6, T7$ and $T12$ from Theorem 1. Note that the one and only case where the condition $d_{G_1}(v) \ge 2$ will be needed is the path $P_2$. At first we investigate the paths $P_n =(V_2, E_2)$ with $E_2 = \{ \{v_1, v_2 \}, \ldots, \{v_{n-1}, v_n \} \}.$

In advance, we mention that $|V_1| \ge 4$ is valid, since $G_1$ is an edge intersection hypergraph of a 3-uniform hypergraph contaning the non-isolated vertex $v \in V_1$.

\begin{enumerate}
\item[(a)] {\bf $T=P_2$.}\\[0.5ex]
Let $v = v_1$ and $u, w \in V_1$ be two neighbors of $v_1$ in $G_1$. Then  $G_1 \oplus P_2 = EI(\cH)$ with
$\cH = ( V_1 \cup \{v_2 \}, \cE_1 \cup \{ \{u, v_1, v_2 \}, \{w, v_1, v_2 \} \})$.

\item[(b)] {\bf $T \in \{P_3, P_4, P_5, P_6 \}$.}\\[0.5ex]
For $3 \le  n \le 6$, let   $i \le n$, $v = v_i$ and $u, w \in V_1$, where $u$ is a neighbor of $v_i$ in $G_1$.

\smallskip
First we consider the  situation $n=3$ and  $v= v_2$, i.e. $v$ is the inner vertex of $P_3$. Then the 1-fusion $G_1 \oplus P_3$ is nothing else than the 2-fusion $G_1 \oplus K_{1,3}$, where $K_{1,3} = (V_2 \cup \{ u \}, E_2 \cup \{ \{ v_2, u \} \} )$. Theorem 3 implies that  this $2$-fusion of $G_1$ and $K_{1,3}$ has the required properties.

\smallskip
Next we
investigate the $1$-fusion $G_1 \oplus P_n$ in the end vertex $v = v_1$ of $P_n$.\\
For this end, we consider the hypergraph
$\cH = ( V_1 \cup \{v_2, \ldots, v_{n} \}, \cE_1 \cup \{ \{u, v_1, v_2 \},
\{ v_1, v_2, v_3 \}, $ $\{ v_2, v_3, v_4 \}, \ldots, \{ v_{n-2}, v_{n-1}, v_{n} \},  \{w, v_{n-1}, v_{n} \} \})$ and obtain $G_1 \oplus P_n = EI(\cH)$.

The remaining cases ($n \ge 4$ and  the vertex  $v_i \in V_1 \cap V_2$ is an inner vertex  of the path $P_n$, i.e. $1 < i < n$) can be obtained from the results above in two steps.\\
Let $P'_n = (V'_2= \{ v_1, v_2, \ldots, v_{i} \}, E'_2 = \{ \{ v_1, v_2 \}, \{ v_2, v_3 \}, \ldots, \{v_{i-1}, v_{i} \} \} )$ and $P''_n = (V''_2= \{ v_i, v_{i+1}, \ldots, v_{n} \}, E'_2 = \{ \{ v_i, v_{i+1} \}, \{ v_{i+1}, v_{i+2} \}, \ldots, \{v_{n-1}, v_{n} \} \} )$. Assume, $|V'_2| \ge |V''_2|$. Then $P'_n$ contains at least three vertices and the assumption of case (b) is fulfilled. In a first step,
$G_1 \oplus P'_n$ and, in a second step, $(G_1 \oplus P'_n)\oplus P''_n$ is an edge intersection hypergraph of a 3-uniform hypergraph, respectively. Only in the second step, when $n-i = 1$ holds (i.e. $P''_n$ contains exactly two vertices) we have to make use of part (a). In this case, the vertex $v_i$ has minimum degree 2 in $G_1 \oplus P'_n$; hence part (a) is applicable.

\end{enumerate}

\noindent
Now we come to the 1-fusion of $G_1$ and the exceptional trees $T7$ and $T12 $.

\begin{enumerate}
\item[(c)] {\bf $T \in \{ T7, T12 \}$.}\\[0.5ex]
Trivially, every vertex in $V_2 = V(T)$ is included in a path of length 4 in $T$.  So let $P_4 $ be such a path in $T$ containing the vertex $v \in V_1 \cap V_2$.
Obviously, $T$ is a 1-fusion of $P_4$ and a second path $P_t$, such that $T = P_4 \oplus P_t$ and $V(P_4) \cap V(P_t) = \{ v'\}$ for a certain vertex $v' \in V_2$ with $d_T(v') = 3$. Clearly, for $T = T7$ we have $t=2$ and for $T = T12$ we get $t=3$.

Part (b) provides that the 1-fusion $G_1 \oplus P_4$ is an edge intersection hypergraph of a 3-uniform hypergraph.
In order to obtain the final 1-fusion $G_1 \oplus T$ from $G_1 \oplus P_4$ it suffices to add the path $P_t$, i.e. $G_1 \oplus T = (G_1 \oplus P_4) \oplus P_t$. In dependence on $t$, part (a)  and part (b), respectively, provides that $G_1 \oplus T$ is an edge intersection hypergraph of a 3-uniform hypergraph.

Note that the assumption for (a) is fulfilled, since $d_{G_1 \oplus P_4}(v') \ge d_T(v') - 1 = 2$, i.e. the vertex $v'$ has at least two neighbours in   $G_1 \oplus P_4$.\hqed

\end{enumerate}

\end{proof}


\begin{remark}
Because in the above proof the condition $d_{G_1}(v) \ge 2$ is needed only for $T = P_2$, this condition in Theorem 4 can be weakened to $d_{G_1}(v) \ge 1$ if we restrict ourselves on trees with at least 3 vertices.
\end{remark}

\begin{theorem}
Let the graph $G_1 = (V_1, E_1)$ be the edge intersection hypergraph of a 3-uniform hypergraph $\cH_1=(V_1, \cE_1)$, $v \in V_1$ with $d_{G_1}(v) \ge 1$ and $C_n = (V_2, E_2)$  be the cycle of length $n \ge 3$. Moreover, let $V_1 \cap V_2 = \{ v \}$ and, for $n=4$, the number of vertices in $G_1$ be at least 5.
Then the 1-fusion $G_1 \oplus C_n$ is an edge intersection hypergraph of a 3-uniform hypergraph.
\end{theorem}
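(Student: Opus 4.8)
\emph{Overview.} I would argue by cases on the length $n$ of the cycle being glued on, the essential divide being $n\ge5$ versus $n\in\{3,4\}$. For $n\ge5$ nothing beyond the quoted results is needed: by Theorem 2 the cycle $C_n$ is the edge intersection hypergraph of a $3$-uniform hypergraph $\cH_2=(V_2,\cE_2)$, and since $V_1\cap V_2=\{v\}$ the graph $G_1\oplus C_n$ is precisely the $1$-fusion of $G_1=EI(\cH_1)$ and $C_n=EI(\cH_2)$. Theorem 3 (with $r=2$, $k=1$) then gives $G_1\oplus C_n=EI(\cH_1\cup\cH_2)$, with $\cH_1\cup\cH_2$ being $3$-uniform. (The hypothesis $d_{G_1}(v)\ge1$ is not used in this case.)

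\emph{The cases $n\in\{3,4\}$ --- set-up.} Here $C_3$ and $C_4$ are \emph{not} edge intersection hypergraphs of $3$-uniform hypergraphs on their own vertex sets, so Theorem 3 does not apply; instead I would build $\cH=(V_1\cup V(C_n),\,\cE_1\cup\cE_{\ast})$ by keeping all hyperedges of $\cH_1$ and adjoining a short list $\cE_{\ast}$ of new triples. Note that $G_1=EI(\cH_1)$ with $v$ non-isolated forces $|V_1|\ge4$; fix once and for all a neighbour $u$ of $v$ in $G_1$, so $\{u,v\}\in E_1$. The design principle for $\cE_{\ast}$ is: for each edge $\{x,x'\}$ of $C_n$ include two triples of $\cE_{\ast}$ meeting exactly in $\{x,x'\}$, while arranging that (i) no two new triples meet in a diagonal of $C_n$ nor in a pair $\{x,p\}$ with $x$ a cycle vertex and $p\in V_1$, and (ii) no new triple meets a hyperedge of $\cH_1$ in a $2$-set outside $E_1$. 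Since $v$ is the only cycle vertex lying in $V_1$, (ii) is automatic for triples that avoid $v$, and for a triple through $v$ it holds provided its other two entries are cycle vertices or one of them is $u$ (the only possible $2$-set intersection with $\cE_1$ being then $\{u,v\}\in E_1$).

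\emph{The explicit constructions.} For $n=3$, writing $C_3$ on $\{v,a,b\}$ and picking $w\in V_1\setminus\{u,v\}$, I would take $\cE_{\ast}=\{\{v,a,b\},\{v,a,u\},\{v,b,u\},\{a,b,w\}\}$; running over the six pairwise intersections, the $2$-sets that occur are exactly $\{v,a\},\{v,b\},\{a,b\}$, together with $\{u,v\}\in E_1$, so $EI(\cH)=E_1\cup\{\{v,a\},\{v,b\},\{a,b\}\}=E(G_1\oplus C_3)$. For $n=4$, write $C_4$ as $v\,a\,b\,c\,v$ (diagonals $\{v,b\}$ and $\{a,c\}$) and pick two distinct $t,t'\in V_1\setminus\{u,v\}$ (available since $|V_1|\ge4$, and in particular under the assumed $|V_1|\ge5$). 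I would take $\cE_{\ast}=\{\{u,v,a\},\{v,a,c\},\{v,b,c\},\{a,b,t\},\{a,b,t'\},\{b,c,u\}\}$, which realizes $\{v,a\}=\{u,v,a\}\cap\{v,a,c\}$, $\{c,v\}=\{v,a,c\}\cap\{v,b,c\}$, $\{a,b\}=\{a,b,t\}\cap\{a,b,t'\}$ and $\{b,c\}=\{v,b,c\}\cap\{b,c,u\}$; moreover the diagonal $\{a,c\}$ lies only in the triple $\{v,a,c\}$ and $\{v,b\}$ only in $\{v,b,c\}$, so neither is realized as an intersection of two distinct hyperedges. It then remains to run over the $\binom{6}{2}$ pairs of new triples and the pairs (new triple, $\cH_1$-hyperedge), checking via (i)--(ii) that no further $2$-set appears; hence $EI(\cH)=E_1\cup\{\{v,a\},\{a,b\},\{b,c\},\{c,v\}\}=E(G_1\oplus C_4)$.

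\emph{The main obstacle.} The part $n\ge5$ and the $n=3$ bookkeeping are routine; the genuinely delicate point is $n=4$, and it is instructive why $C_4$ alone fails. The only triples available on $\{v,a,b,c\}$ are the four ``cyclic'' ones $\{v,a,b\},\{a,b,c\},\{b,c,v\},\{c,v,a\}$, and the two pairs of opposite triples intersect exactly in the diagonals $\{v,b\}$ and $\{a,c\}$, producing $K_4$ rather than $C_4$. Hence the heart of the argument is to realize all four edges of $C_4$ with the help of auxiliary vertices from $V_1$ (the $u,t,t'$ above), chosen so that each diagonal is carried by a single triple, consecutive ``edge-triples'' never repeat their auxiliary vertex (which would create a spurious edge $\{x,p\}$), and every triple through $v$ stays $\cE_1$-harmless. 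Once a family with these features is written down, the remaining verifications are mechanical, and this settles all cases.
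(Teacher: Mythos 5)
Your proposal is correct and follows essentially the same route as the paper: the case $n\ge 5$ is dispatched by Theorems 2 and 3, and $n\in\{3,4\}$ by adjoining an explicit family of new triples to $\cE_1$ (your $C_3$ family is, up to renaming, literally the paper's, and your $C_4$ family is a valid variant of the paper's six-triple construction). A minor bonus of your version: the $C_4$ triples use only the three auxiliary vertices $u,t,t'$ of $V_1$, so they already work for $|V_1|\ge 4$, whereas the paper's construction (using $u,x,y,z$) is what forces the extra hypothesis $|V_1|\ge 5$.
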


\begin{proof}
For $n\ge 5$, Theorem 2 provides that $C_n =( V_2, E_2)$ is an edge intersection hypergraph of a 3-uniform hypergraph $\cH_2=(V_2, \cE_2)$. Therefore, owing to Theorem 3 there is nothing to show in this case.

So we have to investigate only the cycles $C_3$ and $C_4$.
For this end let $V_2 = \{ v_1, v_2, \ldots, v_n \}$, $E_2 = \{ \{ v_1, v_2 \}, \ldots, \{ v_{n-1}, v_n \}, \{v_n, v_1 \} \}$ and $v_1 = v \in V_1 \cap V_2$. Moreover, let $u \in V_1$ be a neighbor of $v_1$ in the graph $G_1$. As mentioned at the beginning of the proof of Theorem 4, we have $|V_1| \ge 4$ and so we can choose a vertex $x \in V_1 \setminus \{ v_1, u \}$.

\begin{enumerate}
\item[(a)] {\bf $n=3$.}\\[0.5ex]
We consider $\cH = ( V_1 \cup \{v_2, v_3 \}, \cE_1 \cup \{ \{ v_1, v_2, v_3 \}, \{u, v_1, v_2 \}, \{u, v_1, v_3 \}, \{x, v_2, v_3 \} \})$.

In comparison with $G_1$, the new hyperedges $\{ v_1, v_2, v_3 \}, \{u, v_1, v_2 \}, \{u, v_1, v_3 \},$ $ \{x, v_2, v_3 \}$ induce no additional edges in $(G_1 \oplus C_3) \setminus \{v_2, v_3 \}$: \\
Clearly, $ \{u, v_1, v_2 \} \cap \{u, v_1, v_3 \} = \{ u, v_1 \}$ is always an edge in $G_1$, namely $\{ u, v_1 \} = \{ u, v \} \in E_1$. Moreover, the hyperedge $ \{ v_1, v_2, v_3 \}$ and $ \{x, v_2, v_3 \}$ has only one vertex with $G_1$ in common, namely the vertex $v_1$ and $x$, respectively.

Hence,  we obtain  $G_1 \oplus C_3 = EI(\cH)$.

\item[(b)] {\bf $n=4$.}\\[0.5ex]
Now we have $|V_1| \ge 5$ and there are two additional vertices $y, z \in V_1 \setminus \{ v_1, u, x \}$ such that $ v_1, u, x, y, z$ are pairwise distinct. \\
We use the hypergraph $\cH = ( V_1 \cup \{v_2, v_3, v_4 \}, \cE(\cH))$ with \\
$ \cE(\cH)) = \cE_1 \cup \{ \{ v_1, v_2, v_3 \},  \{ v_1, v_2, v_4 \},  \{u, v_1, v_4 \}, \{x, v_2, v_3 \}, \{ y, v_3, v_4 \}, \{ z, v_3, v_4 \} \}$.

Analogously to the previous case, it can be verified that $G_1 \oplus C_4 = EI(\cH)$ holds. \hqed
\end{enumerate}

\end{proof}

\section{Cacti as edge intersection hypergraphs}

The basic idea is to decompose a given cactus $G$  into cycles and (in a certain sense maximal) trees.
After that we begin with a cycle $C_n$ of length $n \ge 5$ and a tree $T \notin \{P_1, P_2, \ldots, P_6, T7, T12 \}$, respectively, (which is an edge intersection hypergraph of a 3-uniform hypergraph) and reconstruct the original cactus step by step using Theorem 4 and Theorem 5.
For this purpose, in each step we build a 1-fusion of a (connected) subgraph $G_i$ (which is an edge intersection hypergraph of a 3-uniform hypergraph $\cH_i$) of the original cactus and one of the cycles or trees described above.

We begin with the decomposition. First of all, let $G=(V,E)$ be a cactus and $V'= \{ v_1, \ldots, v_s \} \subseteq V$ be the set of all vertices having a degree of at least 3 and being contained in a cycle of $G$. Evidently, the vertices in $V'$ are articulation vertices of $G$.
We refer to these vertices  as the {\em decomposition vertices} or shortly {\em d-vertices} of $G$. It is easy to see that $V' = \emptyset$ is equivalent to the case that the cactus $G$ is a cycle or a tree, respectively. In this case Theorem 1 and Theorem 2 include the corresponding characterizations.
So in the following assume $V' \neq \emptyset$.

\smallskip
The so-called {\em tree-cycle-decomposition} of the cactus $G$ will be carried out in two consecutive steps.

\smallskip
\noindent
\ul{Step 1.} In each cycle of $G$, we delete all  edges and all vertices of degree 2. Thus we obtain a forest consisting of pairwise vertex-disjoint trees $T^1, T^2, \ldots, T^k$, the so-called {\em limbs} of $G$.
Clearly, every limb contains at least one $d$-vertex. If the tree $T^j$ is a single vertex, i.e. $V(T^j) = \{ v_j \}$, then $v_j$ is an articulation vertex that connects $\frac{1}{2} d_G(v_j)$ cycles in $G$.
Note that the limbs are the  "(in a certain sense maximal) trees" mentioned at the beginning of the section.

\smallskip
\noindent
\ul{Step 2.}
We start again with the original cactus $G$ and delete all edges $e \in E(T^1), \ldots, E(T^k)$. Besides several isolated vertices, this leads to a system of Eulerian graphs which can be uniquely decomposed into a system $C^1, \ldots, C^l$ of pairwise edge-disjoint cycles. We denote $C^1, \ldots, C^l$ as {\em the cycles} of $G$.

\medskip
The next remark is a collection of some simple, but useful properties of the tree-cycle-decomposition.
\begin{remark}
\begin{enumerate}
\item[(i)] The $d$-vertices $v_1, \ldots, v_s$, the limbs $T^1, \ldots, T^k$ as well as the cycles $C^1, \ldots, C^l$ are uniquely determined.
\item[(ii)] The limbs $T^1, \ldots, T^k$ are pairwise vertex-disjoint.
\item[(iii)] Any $d$-vertex  is contained in at least one of the cycles of $G$.
\item[(iv)] Any $d$-vertex  is contained in at most one of the limbs of $G$.
\item[(v)]  If $V' \neq \emptyset$, then every cycle  and every limb  includes at least one $d$-vertex.
\end{enumerate}
\end{remark}

\smallskip
Now we are ready to formulate our main theorem.

\smallskip
\begin{theorem}
Let $G = ( V, E)$ be a cactus with
\vspace{-1mm}
\begin{enumerate}
\item[(a)] circumference $ci(G) \ge 5$ \quad or
\vspace{-2mm}
\item[(b)] $G$ contains a limb $T \notin \{P_1, P_2, \ldots, P_6, T7, T12 \}$.
\end{enumerate}
\vspace{-1mm}
Then $G$ is  an edge intersection hypergraph of a 3-uniform hypergraph.
\end{theorem}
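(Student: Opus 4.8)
The idea is to discard the trivial case and then to rebuild $G$ from its tree-cycle-decomposition by a sequence of $1$-fusions, applying Theorems 4 and 5 at every step.

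If $V'=\emptyset$ then $G$ is a single cycle or a single tree; under (a) it is a cycle of length $\ge 5$, covered by Theorem 2, and under (b) it is a tree not in $\{P_1,\dots,P_6,T7,T12\}$, covered by Theorem 1. So assume $V'\neq\emptyset$, and take the limbs $T^1,\dots,T^k$, the cycles $C^1,\dots,C^l$ and the $d$-vertices. The structural fact I would record first is that these pieces fit together tree-like: every edge of $G$ lies in exactly one of them, two distinct pieces share at most one vertex and every such vertex is a $d$-vertex, and the graph whose nodes are the pieces $T^1,\dots,T^k,C^1,\dots,C^l$, with two pieces joined when they have a $d$-vertex in common, is a tree (this is a routine consequence of the block structure of a cactus together with the fact that every cycle subgraph of $G$ coincides with one of the $C^j$). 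In particular, under (a) some $C^j$ has length $ci(G)\ge 5$, and by Remark 3 distinct limbs are vertex-disjoint and every $d$-vertex lies in at least one cycle.

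For the reconstruction I would pick a root piece $P_0$: under (a) a cycle $C^j$ of length $\ge 5$ (an edge intersection hypergraph of a $3$-uniform hypergraph by Theorem 2), under (b) a limb $T\notin\{P_1,\dots,P_6,T7,T12\}$ (likewise by Theorem 1); in both cases $|V(P_0)|\ge 4$. Then add the remaining pieces one at a time in an order extending the piece-tree rooted at $P_0$, maintaining a connected subgraph $G_{\mathrm{cur}}$ of $G$ that is an edge intersection hypergraph of a $3$-uniform hypergraph, starting with $G_{\mathrm{cur}}=P_0$. When a piece $P$ is processed, its parent $P'$ in the piece-tree has already been added, and since the piece-tree is a tree, $P$ meets $G_{\mathrm{cur}}$ in exactly the single $d$-vertex $v=V(P)\cap V(P')$, with $v\in V(G_{\mathrm{cur}})$. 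A one-vertex limb contributes no edge and is skipped; a limb with at least two vertices is attached by $G_{\mathrm{cur}}:=G_{\mathrm{cur}}\oplus P$ via Theorem 4; a cycle via Theorem 5. Every piece is eventually added and $G$ is the union of all pieces glued along their common $d$-vertices, so the process ends with $G_{\mathrm{cur}}=G$, which is the assertion.

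The substance, and the main obstacle, is verifying the hypotheses of Theorems 4 and 5 throughout this induction. The degree conditions come for free: $v$ is a $d$-vertex, and if $P$ is a limb then its parent $P'$ is necessarily a cycle (two limbs being vertex-disjoint, a limb's only piece-neighbours are cycles), so $v$ already carries the two edges of $P'$ and $d_{G_{\mathrm{cur}}}(v)\ge 2$ as Theorem 4 needs; if $P$ is a cycle then $v$ carries at least one edge of the earlier piece through which it entered $G_{\mathrm{cur}}$, so $d_{G_{\mathrm{cur}}}(v)\ge 1$ as Theorem 5 needs. What is left is the side condition of Theorem 5 for a cycle of length $4$, requiring $|V(G_{\mathrm{cur}})|\ge 5$. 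Under (a) this is automatic, since $G_{\mathrm{cur}}\supseteq P_0$ with $|V(P_0)|\ge 5$; under (b), $|V(P_0)|\ge 4$ and $|V(G_{\mathrm{cur}})|=4$ can occur only at the very first fusion with $G_{\mathrm{cur}}=P_0=K_{1,3}$. Since every piece-neighbour of $K_{1,3}$ is a cycle, I would schedule the additions so that, if $K_{1,3}$ has a neighbour of length $3$ or $\ge 5$, it is attached first (after which $|V(G_{\mathrm{cur}})|\ge 6$), and if every neighbour of $K_{1,3}$ is a $C_4$, I would verify the single extra graph $K_{1,3}\oplus C_4$ to be an edge intersection hypergraph of a $3$-uniform hypergraph by an explicit hyperedge set on its seven vertices (checkable, e.g., with the function $EEI$ from the introduction); after this first step $|V(G_{\mathrm{cur}})|\ge 6$, so Theorem 5 applies with no restriction for the rest of the induction. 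Apart from this scheduling point, the theorem follows by a direct induction along the piece-tree.
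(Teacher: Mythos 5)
Your proposal follows essentially the same route as the paper: discard the case $V'=\emptyset$ via Theorems 1 and 2, root the tree-cycle-decomposition at a long cycle (case (a)) or at a non-exceptional limb (case (b)), and rebuild $G$ by iterated $1$-fusions, invoking Theorem 4 for limbs and Theorem 5 for cycles, with the degree conditions supplied exactly as in the paper (a limb attaches only at a vertex of an already-present cycle, so $d\ge 2$; a cycle attaches at a non-isolated vertex, so $d\ge 1$) and with $K_{1,3}\oplus C_4$ isolated as the one configuration where Theorem 5's side condition fails.

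Two points deserve correction, though neither derails the argument. First, your auxiliary claim that the piece-incidence graph is a tree is false: for two (or three) triangles sharing a single vertex $v$, the pieces $C^1$, $C^2$ and the one-vertex limb $\{v\}$ pairwise share the $d$-vertex $v$, so the piece-graph contains a $3$-cycle. What you actually need --- and what the paper asserts --- is only that each newly attached piece meets the union of the previously attached pieces in exactly one vertex; this follows directly from the cactus property (a second common vertex would put an edge of the new cycle on two cycles, or an edge of the new limb on a cycle), not from any tree structure on the pieces, and the ``parent'' may fail to be unique without harming the induction. Second, there is not a ``single extra graph'' $K_{1,3}\oplus C_4$: the fusion vertex may be the center or a leaf of the star, giving two non-isomorphic graphs, and the paper exhibits an explicit $3$-uniform hypergraph for each. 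Your plan to verify this base case explicitly is the right one, but both fusions must be checked; once they are, the rest of your induction (and your optional rescheduling trick, which merely avoids one of these verifications when $K_{1,3}$ has a non-$C_4$ neighbour) goes through.
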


\begin{proof}
Let $ \{ G_1, G_2, \ldots, G_{k+l} \} = \{ C^1, C^2, \ldots, C^l, T^1, T^2, \ldots, T^k \}$, where $C^1, C^2, \ldots, C^l$ and \linebreak
$T^1, T^2, \ldots, T^k$ are the cycles and the limbs of $G$, respectively.

\medskip
\noindent
\ul{Case (a): $ci(G) \ge 5$.}

\smallskip
Let the indices of the subgraphs $G_1, \ldots, G_{k+l}$ be chosen so that $G_1 = C^1$ is a cycle of a length of at least 5 and, for every $p \in \{ 1, 2, \ldots, k+l-1 \}$, the subgraph $G_1 \cup \ldots \cup G_p$ of the cactus $G$ has a vertex $v_p$ in common with the subgraph $G_{p+1}$. Obviously,
$\big(V(G_1) \cup \ldots \cup V(G_p)\big) \cap V(G_{p+1}) = \{ v_p \} \subseteq V'$, where $V'$ is the set of the $d$-vertices of $G$.

Trivially, $G = G_1 \cup \ldots \cup G_{k+l}$ and, for any $p \in \{ 1, 2, \ldots, k+l-1 \}$, from $|V(G_{p+1})| = 1$ (this is the case if and only if $G_{p+1}$ is a trivial tree containing only one vertex, i.e. $V(G_{p+1}) = \{ v_p \}$) it follows $(G_1 \cup \ldots \cup G_p) \cup G_{p+1} =  G_1 \cup \ldots \cup G_p$. Owing to our indexing of $G_1, G_2, \ldots, G_{k+l}$ this is equivalent to
$( \ldots((G_1 \oplus G_2) \oplus G_3) \ldots \oplus G_p) \oplus G_{p+1} =  ( \ldots((G_1 \oplus G_2) \oplus G_3) \ldots \oplus G_p)$. Note that all the clique-fusions in this expression are 1-fusions and, additionally, all these clique-fusions are connected.

To mention the most trivial case, the 1-fusion of any graph $G' = ( V(G'), E(G'))$ with a graph $G'' = ( V(G'') = \{ v' \}, E(G'') = \emptyset)$, with $v' \in V(G')$, is the original graph $G'$ itself.
Therefore, if $G'$ is an edge intersection hypergraph of a 3-uniform hypergraph $\cH$, then also $G' \oplus G'' = EI(\cH)$ is valid.

According to the assumption, $G_1$ is an edge intersection hypergraph of a 3-uniform hypergraph $\cH_1$ containing at least 5 vertices. Therefore $G_1$ fulfills also the assumptions of Theorems 4 and 5 and $G_1 \oplus G_2$ is also an edge intersection hypergraph of a 3-uniform hypergraph $\cH_2$. Since $G_1$ is a cycle, all of its vertices have degree $2$ and it plays no role whether or not $G_2$ is a cycle or a limb.

With two little additional arguments we can argue in  the same manner for arbitrarily chosen $p \in \{  2, \ldots, k+l-1 \}$ considering $( \ldots((G_1 \oplus G_2) \oplus G_3) \ldots \oplus G_p) \oplus G_{p+1}$.

First, if  $( \ldots((G_1 \oplus G_2) \oplus G_3) \ldots \oplus G_p)$ is an edge intersection hypergraph of a 3-uniform hypergraph $\cH_p$ then it contains at least 5 vertices (since $|V(G_1)| \ge 5$ holds). This implies that the clique-fusion $( \ldots((G_1 \oplus G_2) \oplus G_3) \ldots \oplus G_p) \oplus G_{p+1}$ also in the case $G_{p+1} = C_4$ is an edge intersection hypergraph of a 3-uniform hypergraph. Clearly, also for $G_{p+1} = C_3$ no problem occurs.

Secondly, in case of $G_{p+1} = P_2$, we have to ensure that the vertex \\
$v_p \in \big(V(G_1) \cup \ldots \cup V(G_p)\big) \cap V(G_{p+1})$ has a degree $d_{G_1 \cup \ldots \cup G_p}(v_p) \ge 2$ (cf. Theorem 4 and Remark 2). This yields from the definition of the $d$-vertices and the limbs as well as from the construction of our tree-cycle-decomposition of the cactus $G$ in the following way.
In case of $G_{p+1} = P_2$ the graph $G_{p+1}$ is a limb. Remark 3(iv) includes that the $d$-vertex $v_p \in V(G_{p+1}$ cannot be contained in another limb of $G$. Therefore $v_p$ is included in a cycle of $(\ldots((G_1 \oplus G_2) \oplus G_3) \ldots  \oplus G_p) = G_1 \cup \ldots \cup G_p $ and has a degree $d_{G_1 \cup \ldots \cup G_p}(v_p) \ge 2$.

This completes the proof of Case (a).

\medskip
\noindent
\ul{Case (b): $ci(G) \le 4$.}

\smallskip
We use nearly the same argumentation as in Case (a), the only modification is that  we have to start with $G_1 =T^1$ instead of $G_1 = C^1$, where $T^1 \notin  \{P_1, P_2, \ldots, P_6, T7, T12 \}$ is a limb of $G$.
If $G=T^1$ holds then there is nothing to show.

So assume $k+l > 1$ and choose the indices of $G_2, G_3, \ldots, G_{k+l}$ in the same way as in Case (a).
Thus the graph $G_1=T^1$ is again an edge intersection hypergraph of a 3-uniform hypergraph. Moreover, $G_2 = C_3$ or $G_2 = C_4$ and it suffices to prove that any fusion $G_1 \oplus G_2 = T^1 \oplus C_n$ ($n \in \{ 3, 4 \}$) is an edge intersection hypergraph of a 3-uniform hypergraph. The rest of the argumentation can be taking over word-for-word from Case (a). So let us consider $T^1 \oplus C_n$.

Since $T^1$ is an edge intersection hypergraph of a 3-uniform hypergraph, we can apply Theorem 5. The only exception is the case $|V(T^1)| = 4$ and $n=4$. Obviously, this  corresponds to $T^1 = K_{1,3}$ and we have to investigate the two possible 1-fusions of $K_{1,3}$ and $C_4$.

For this end, let  $K_{1,3} =( V_1 = \{ v_1, v_2, v_3, v_4 \}, E_1)$, $C_4 =( V_2 = \{v_4, v_5, v_6, v_7 \}, E_2)$, where  $ E_2 = \{ \{ v_4, v_5 \}, \{ v_5, v_6 \}, \{ v_6, v_7 \}, \{ v_7, v_4 \} \})$, and look at $ K_{1,3} \oplus C_4$. In the following, we discuss both 1-fusions.

\bigskip
The first 1-fusion we have to investigate is the situation  that $v_4$ is not the center of the star; so w.l.o.g. let $v_1$ be the center, i.e. $E_1 = \{ \{ v_1, v_2 \}, \{ v_1, v_3 \},\{ v_1, v_4 \} \}$. Then the hypergraph \\
$\cH = (V, \cE)$ with
$\cE = \{ \{ v_1, v_2, v_3 \},
\{ v_1, v_2, v_4 \},
\{ v_1, v_3, v_4 \},
\{v_1, v_4, v_7 \},
\{ v_1, v_5, v_6 \},
\{ v_2, v_6, v_7 \}, $ \\
$
\{ v_3, v_6, v_7 \},
\{ v_4, v_5, v_6 \},
\{v_4, v_5, v_7 \} \}$
provides the 1-fusion $EI(\cH) = K_{1,3} \oplus C_4 = ( V, E )$ with $V = V_1 \cup V_2$ and
$E = \{ \{ v_1, v_2 \}, \{ v_1, v_3 \},\{ v_1, v_4 \}, \{ v_4, v_5 \}, \{ v_5, v_6 \}, \{ v_6, v_7 \}, \{ v_7, v_4 \} \}$ (see the first picture in Fig. 3).

\bigskip
Now let $v_4$ be the center of the star. Thus we have  $E_1 = \{ \{ v_1, v_4 \}, \{ v_2, v_4 \},\{ v_3, v_4 \} \}$.
We consider the hypergraph
$\cH = (V, \cE)$ with
$\cE = \{
\{ v_1, v_3, v_4 \},
\{ v_1, v_4, v_7 \},
\{v_2, v_3, v_4 \},
\{ v_2, v_4, v_5 \}, $ \\ $
\{ v_2, v_4, v_7 \},
\{ v_3, v_6, v_7 \},
\{ v_4, v_5, v_6 \},
\{v_5, v_6, v_7 \} \}$. $\cH$ has the edge intersection hypergraph  $EI(\cH) = K_{1,3} \oplus C_4 = ( V, E )$ with $V = V_1 \cup V_2$ and
$E = \{ \{ v_1, v_4 \}, \{ v_2, v_4 \},\{ v_3, v_4 \}, \{ v_4, v_5 \}, \{ v_5, v_6 \}, \{ v_6, v_7 \}, $ $ \{ v_7, v_4 \} \}.$ This is the second possible 1-fusion of  $K_{1,3}$ and $C_4$  (see the second picture in Fig. 3).

\hqed
\end{proof}

\vspace{-6mm}
\providecolor{black}{rgb}{0,0,0}
	\begin{figure}[h]
	\centering
	\begin{tikzpicture}


		\path[solid,line cap=round,draw=black, line width=0.030499075785582256cm] (1.8299445471349354,4.015711645101664) --  (0.6709796672828097,5.17467652495379);

		\path[solid,line cap=round,draw=black, line width=0.030499075785582256cm] (1.8299445471349354,4.015711645101664) --  (2.9889094269870613,5.17467652495379);

		\path[solid,line cap=round,draw=black, line width=0.030499075785582256cm] (1.8299445471349354,4.015711645101664) --  (1.8299445471349354,2.856746765249538);

		\path[solid,line cap=round,draw=black, line width=0.030499075785582256cm] (1.8299445471349354,0.5388170055452866) --  (0.6709796672828097,1.6977818853974123);

		\path[solid,line cap=round,draw=black, line width=0.030499075785582256cm] (1.8299445471349354,2.856746765249538) --  (0.6709796672828097,1.6977818853974123);

		\path[solid,line cap=round,draw=black, line width=0.030499075785582256cm] (9.170055452865064,2.856746765249538) --  (8.01109057301294,4.015711645101664);

		\path[solid,line cap=round,draw=black, line width=0.030499075785582256cm] (9.170055452865064,2.856746765249538) --  (9.170055452865064,4.015711645101664);

		\path[solid,line cap=round,draw=black, line width=0.030499075785582256cm] (9.170055452865064,2.856746765249538) --  (10.32902033271719,4.015711645101664);

		\path[solid,line cap=round,draw=black, line width=0.030499075785582256cm] (9.170055452865064,2.856746765249538) --  (8.01109057301294,1.6977818853974123);

		\path[solid,line cap=round,draw=black, line width=0.030499075785582256cm] (9.170055452865064,2.856746765249538) --  (10.32902033271719,1.6977818853974123);

		\path[solid,line cap=round,draw=black, line width=0.030499075785582256cm] (9.170055452865064,0.5388170055452866) --  (8.01109057301294,1.6977818853974123);

		\path[solid,line cap=round,draw=black, line width=0.030499075785582256cm] (9.170055452865064,0.5388170055452866) --  (10.32902033271719,1.6977818853974123);

		\path[solid,line cap=round,draw=black, line width=0.030499075785582256cm] (2.9889094269870613,1.6977818853974123) --  (1.8299445471349354,2.856746765249538);

		\path[solid,line cap=round,draw=black, line width=0.030499075785582256cm] (2.9889094269870613,1.6977818853974123) --  (1.8299445471349354,0.5388170055452866);

		\fill[fill=black] (1.8299445471349354,4.015711645101664) circle (0.07624768946395565);
		\pgftext[x=2.236598890942699cm,y=4.015711645101664cm]{$v_{1}$}

		\fill[fill=black] (0.6709796672828097,5.17467652495379) circle (0.07624768946395565);
		\pgftext[x=0.6709796672828097cm,y=5.581330868761553cm]{$v_{2}$}

		\fill[fill=black] (2.9889094269870613,5.17467652495379) circle (0.07624768946395565);
		\pgftext[x=2.9889094269870613cm,y=5.581330868761553cm]{$v_{3}$}

		\fill[fill=black] (1.8299445471349354,2.856746765249538) circle (0.07624768946395565);
		\pgftext[x=2.236598890942699cm,y=2.856746765249538cm]{$v_{4}$}

		\fill[fill=black] (0.6709796672828097,1.6977818853974123) circle (0.07624768946395565);
		\pgftext[x=0.2643253234750462cm,y=1.6977818853974123cm]{$v_{7}$}

		\fill[fill=black] (1.8299445471349354,0.5388170055452866) circle (0.07624768946395565);
		\pgftext[x=1.8299445471349354cm,y=0.1321626617375231cm]{$v_{6}$}

		\fill[fill=black] (9.170055452865064,2.856746765249538) circle (0.07624768946395565);
		\pgftext[x=9.576709796672828cm,y=2.856746765249538cm]{$v_{4}$}

		\fill[fill=black] (8.01109057301294,1.6977818853974123) circle (0.07624768946395565);
		\pgftext[x=7.604436229205176cm,y=1.6977818853974123cm]{$v_{7}$}

		\fill[fill=black] (9.170055452865064,0.5388170055452866) circle (0.07624768946395565);
		\pgftext[x=9.170055452865064cm,y=0.1321626617375231cm]{$v_{6}$}

		\fill[fill=black] (10.32902033271719,1.6977818853974123) circle (0.07624768946395565);
		\pgftext[x=10.735674676524955cm,y=1.6977818853974123cm]{$v_{5}$}

		\fill[fill=black] (8.01109057301294,4.015711645101664) circle (0.07624768946395565);
		\pgftext[x=8.01109057301294cm,y=4.422365988909427cm]{$v_{1}$}

		\fill[fill=black] (9.170055452865064,4.015711645101664) circle (0.07624768946395565);
		\pgftext[x=9.170055452865064cm,y=4.422365988909427cm]{$v_{2}$}

		\fill[fill=black] (10.32902033271719,4.015711645101664) circle (0.07624768946395565);
		\pgftext[x=10.32902033271719cm,y=4.422365988909427cm]{$v_{3}$}

		\fill[fill=black] (2.9889094269870613,1.6977818853974123) circle (0.07624768946395565);
		\pgftext[x=3.3955637707948245cm,y=1.6977818853974123cm]{$v_{5}$}
	\end{tikzpicture}

		\caption{The two 1-fusions of $K_{1,3}$ and $C_4$}
	\end{figure}
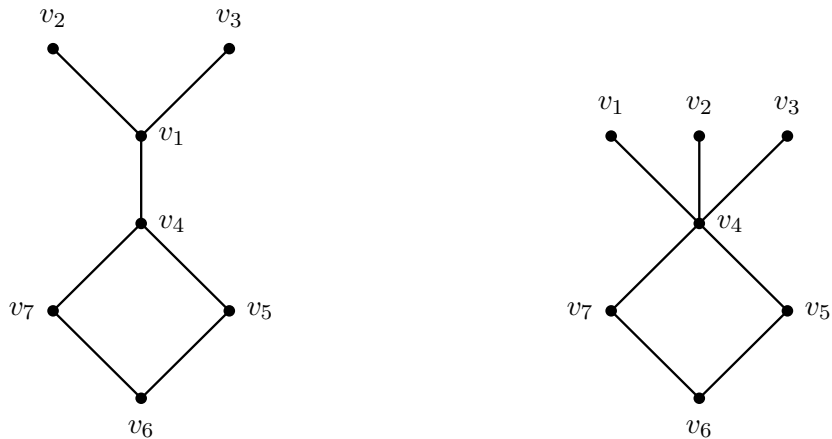

\section{Concluding remarks}

In Section 3 we made use of  very special clique-fusions, namely the (iterated) 1-fusion of edge intersection hypergraphs of 3-uniform hypergraphs. Since cacti can be decomposed into limbs and cycles using the $d$-vertices, the 1-fusion had been proved to be a suitable tool  for our investigations on cacti.

Besides cycles and trees also other classes of graphs are known to be edge intersection hypergraphs of 3-uniform hypergraphs, e.g. as wheels and  complete graphs with at least 5 and 4 vertices, respectively. So it would be interesting to ask for more  classes of graphs being edge intersection hypergraphs which can be constructed from known such graphs using the clique-fusion. Besides the 1-fusion also the 2-fusion seems to be a relatively simple tool for such constructions.

Remember that in Section 3 we restricted ourselves on the {\em iterated} clique-fusion in the sense, that in each step we applied the 1-fusion only to two graphs (see the proof of Theorem 6 where we dealt with the clique fusion in the form $( \ldots((G_1 \oplus G_2) \oplus G_3) \ldots \oplus G_p) \oplus G_{p+1}$. Another interesting topic for further investigations could be the usage of the $k$-fusion in a more common sense, namely as  $G_1 \oplus \ldots \oplus G_r$, where $r > 2$ holds (see the original definition at the beginning of Section 2). We conjecture that -- using the clique-fusion in this common sense -- the construction of corresponding classes of edge intersection hypergraphs of 3-uniform hypergraphs may be much more complicated.

Finally, we formulate three problems.

\begin{problem}
Using the clique-fusion, find more classes of edge intersection hypergraphs of 3-uniform hypergraphs.
\end{problem}

\begin{problem}
Besides the clique-fusion, find other tools for the construction  of graphs being  edge intersection hypergraphs of $r$-uniform $(r \ge 3)$ or non-uniform hypergraphs.
\end{problem}

\begin{problem}
Find more classes of graphs being edge intersection hypergraphs of $r$-uniform $(r \ge 3)$ or non-uniform hypergraphs.
\end{problem}

Instead of graphs, also the investigation of hypergraphs which are edge intersection hypergraphs of certain hypergraphs is an attractive topic.

%
%
%
%
%
%
%
%


\begin{thebibliography}{}	
\bibitem{GST5} C. Berge, \textit{Graphs and Hypergraphs}, North Holland, Amsterdam, 1973.	
\bibitem{Bie} Th. Biedl, M. Stern, \textit{On edge-intersection graphs of $k$-bend paths
in grids}, Discr.  Math. and Theor. Comp. Sci. \textbf{12(1)} (2010) 1--12.
\bibitem{Cam} K. Cameron, S. Chaplick, C.T. Hoang, \textit{Edge intersection graphs of $L$-shaped paths in grids}, Discr. Appl. Math. \textbf{210} (2016) 185--194.
\bibitem{Gol} M.C. Golumbic, R.E. Jamison, \textit{The Edge Intersection Graphs
of Paths in a Tree}, J. Comb. Theory, Series B \textbf{38} (1985) 8--22.
\bibitem{Nai} R.N. Naik, S.B. Rao, S.S. Shrikhande, N.M. Singhi, \textit{Intersection Graphs of $k$-uniform Linear Hypergraphs}, Europ. J. Combinatorics \textbf{3} (1982) 159--172.
\bibitem{RW} R.C. Read, R.J. Wilson, \textit{An Atlas of Graphs}, Oxford University Press, New York, 1998.
\bibitem{Sku} P.V. Skums, S.V. Suzdal, R.I. Tyshkevich, \textit{Edge intersection graphs of linear 3-uniform hypergraphs}, Discr. Math. \textbf{309} (2009) 3500--3517.
\bibitem{ST2019_1} M. Sonntag, H.-M. Teichert, \textit{Edge intersection hypergraphs -- a new hypergraph concept}, arXiv:1901.06292[math.CO] (2019) 1--15. (submitted for publication)
\bibitem{WMath} Wolfram Research, Inc., Mathematica, Version 8.0, Champaign, IL (2010).
\end{thebibliography}
\end{document}